\documentclass[11pt]{amsart}

\usepackage{amsmath,amssymb,amsthm,comment,enumitem,mathtools,tikz,ytableau}
\usepackage{hyperref}
\usepackage[margin=1.5in]{geometry}
\usepackage[all]{hypcap}

\ytableausetup{centertableaux}

\newtheorem{theorem}{Theorem}

\newtheorem{corollary}[theorem]{Corollary}
\newtheorem{lemma}[theorem]{Lemma}
\newtheorem{proposition}[theorem]{Proposition}
\newtheorem{remark}[theorem]{Remark}

\theoremstyle{definition}
\newtheorem{definition}[theorem]{Definition}
\newtheorem{notation}[theorem]{Notation}

\newcommand{\qmat}{\lbrace q\rbrace_{\alpha_2}}
\newcommand{\lb}{\lbrace}
\newcommand{\rb}{\rbrace}
\newcommand{\al}{\alpha}
\newcommand{\nts}{\negthickspace}
\newcommand{\lf}{\lfloor}
\newcommand{\rf}{\rfloor}

\title{Solutions to $\sum_{i=1}^n 1/x_i=1$ in integers $p^a\,q^b$ with $p$ and $q$ two set primes}

\author{Claire Levaillant}

\begin{document}

\begin{abstract} We present an algorithm for computing all the solutions in not necessarily distinct integers to the decomposition of the unit into a sum of unit fractions with denominators $p^a.q^b$ where $p$ and $q$ are two distinct primes, each appearing at least once in the solution. 
\end{abstract}

\maketitle

\noindent\textbf{Acknowledgements.} This paper grew out of a joint work with Joel Louwsma. Because his algorithm and programming differ from those of the present paper, out of a common agreement, a future work will appear independently, which also betters some of the results contained in this paper. The author is indebted to him for many fruitful conversations and for allowing her to state and use his yet unpublished Proposition~\ref{prop:construction} for the sake of the discussion contained in the current paper.  A better version of the latter result is scheduled to appear later. The author would like to acknowledge the mathematics department of the University of Southern California, where some parts of her CAML program were changed and bettered. She is grateful to Yassine El Maazouz for generously printing reference \cite{CUR} during her short visit at the mathematics department of Caltech. 

\section{Introduction and preliminary results}
We investigate the solutions to the Diophantine equation $\sum_{i=1}^n 1/x_i=1$ in integers of the form $p^a q^b$, where $p$ and $q$ are two distinct primes, $a\geq 0$, $b\geq 0$, the primes $p$ and $q$ are the only divisors involved in the denominators, and they both appear as factor in the denominators at least once. In particular, the two primes may vary from a solution to another solution, but may not vary from a unit fraction to another unit fraction.

Studying the solutions to the Diophantine equation $\sum_{i=1}^n 1/x_i=1$ in distinct integers and more generally decomposing any positive rational number into a sum of distinct unit fractions, a so-called ``Egyptian fraction", goes back in history as far as an ancient Egyptian text, namely the Rhind Mathematical Papyrus, dating back to around $1650$ B.C. \cite{ROB}.
In not necessarily distinct integers, one interest in the solutions to this Diophantine equation relies in the fact that finding such solutions to the Diophantine equation is equivalent to finding $n$ positive integers $k_i$'s with $1\leq i\leq n$, with no common factor, such that:
\[k_j\bigg|\sum_{i=1}^n k_i,\qquad\forall 1\leq j\leq n;\]
see for instance Theorem $4.4$ of \cite{KER} proven in a more general setting. This problem is itself equivalent to finding an arithmetical structure on the complete graph $K_n$. Arithmetical structures were originally introduced by Lorenzini in \cite{LOR} in order to study intersections of degenerating curves in algebraic geometry. From the matricial formulation of Lorenzini of an arithmetical structure, appears a finite abelian group called ``critical group", which is a generalization of the sandpile group; see \cite{BIG}. The latter group itself relates to the chip-firing game on a graph; see e.g. \cite{GLA} and \cite{BLS}.

The number of arithmetical structures on $K_n$ appears in the online encyclopedia of integer sequences \cite{OEI} for $n\leq 8$. The case $n=9$ is specifically addressed in \cite{LEV}, working only in distinct integers and adopting the viewpoint of the Diophantine equation.The study, which also generalizes to any decomposition length, is restricted to solutions involving exactly two prime factors in the denominators, one of which is $2$. The other (odd) prime may not vary from a unit fraction to another unit fraction, but may vary from a solution to another solution. The main restriction however is on the highest exponent of $2$ which should not exceed $2$. An independent study in \cite{LOU}, which uses a very different method,  generalizes the latter work by additionally allowing for non-distinct integers and the odd prime factor of the denominators gets replaced with any positive integer that is not a power of $2$. However, the same restriction on the highest exponent of $2$ is crucially maintained. This restriction gets waived in the current paper, which offers a much wider framework than \cite{LEV}, by considering solutions in not necessarily distinct integers involving any two distinct primes, without any restriction on their respective highest exponents. When the integers are odd, the minimal length of a representation of the unit as a sum of distinct unit fractions is $9$. In \cite{BUR}, the author had shown that there are exactly five solutions in distinct odd integers when $n=9$, all of which involved at least three odd primes. The current study, applied with $p=2$ and distinct integers, thus allows in particular to find all the solutions in $9$ distinct integers involving only two primes.

Our approach is different from the almost century old approach of Erd\"os and Graham in that they set the number of primes involved in the denominators, but they allow these primes to vary from a unit fraction to the other, while we don't. Their work gathered in \cite{BUT} led to showing that each natural number has a representation in Egyptian fraction with each denominator having three distinct prime divisors. Likewise in \cite{BU2}, Burshtein discusses the case of two distinct prime factors while allowing these primes to vary from a unit fraction to the other. Other authors have been following our approach by setting both the number of primes involved and setting these primes, while their main interest relied in counting the solutions and finding some bounds on these counts. Their work appears in \cite{CHE}, where they only deal with Egyptian fraction decompositions of the unit. Contrary to us, they do not exhibit the solutions. Our study provides an algorithm for exhibiting all the solutions to the Diophantine equation, under some given restrictions. After these restrictions are set, we do not limit ourselves to bounding the number of solutions, nor do we limit ourselves to exhibiting only a few of them.

We will record a solution as an array of integers $\{k_{a,b}\}_{a,b\geq0}$, where  $k_{a,b}$ records the number of times the term $1/p^aq^b$ appears in the solution. We represent this array graphically, with $a$ increasing along the columns from left to right, and $b$ increasing along the rows, from bottom to top. 
For example, letting $n=7$ and setting $p=2$ and $q=3$, the solution
\[\frac{1}{2}+\frac{1}{2^2}+\frac{1}{2^3\cdot3}+\frac{1}{2^3\cdot3}+\frac{1}{2\cdot3^2}+\frac{1}{2\cdot3^2}+\frac{1}{2\cdot3^2}=1,\] gets represented as:
\[
\begin{ytableau}
0 & 3 & 0 & 0 \\
0 & 0 & 0 & 2 \\
0 & 1 & 1 & 0\\
\end{ytableau}
\]
\[\begin{array}{l}\end{array}\]
We will describe an algorithm which, provided the primes $p$ and $q$ involved as factors in the denominators, the number $n$ of unknowns of the Diophantine equation, and the width $\al_p+1$ of the table in entry, returns all the solutions to the equation with highest $p$-valuation at most $\al_p$.
From there, 
fix the width $\al_p+1$ of the table and $n$ the number of unknowns. As part of our work, we are able to bound the primes $q$ occurring in solutions with highest $p$-valuation at most $\al_p$. For a given prime $p$ and a given width, we thus obtain all the solutions to the equation in $n$ integer variables, involving all possible other primes $q$. The present work can thus be viewed as a generalization of \cite{LEV} in the case when $p$ is any prime instead of $2$, $\al_p>2$, and the integers are not necessarily distinct. Another versant of our work is the following. We can set the primes $p$ and $q$ instead of setting one prime and letting the other prime vary from a solution to another solution. As a matter of fact, the largest denominator in the decomposition of the unit into a sum of $n$ unit fractions is bounded strictly by the Sylvester sequence $S_n$. The Sylvester sequence is defined by
\[S_1=2,\qquad\forall i\geq 1,\,S_{i+1}=1+\prod_{k=1}^i S_k.\]
This fact was shown by Curtiss in \cite{CUR}, upon a question raised by Kellogg a year earlier in \cite{KEL}. Thus, denoting by $\al_p\geq 1$ the  highest $p$-valuation of a solution with $n$ integer variables, we have
\[p^{\al_p}<S_n,\]
forcing in turn
\[\al_p<log_p(S_n).\]
By running the algorithm on all the possible table widths, we thus obtain all the solutions of length $n$ in $p^aq^b$, with $a\geq 0$, $b\geq 0$, and $p$ and $q$ each appearing at least once.

The paper is structured as follows. The first part of the paper focuses on the repartition of the primes $q$ occurring in solutions when the other prime $p$, the highest $p$-valuation and the decomposition length are all fixed. As part of our work, we establish a sufficient condition for the existence of a solution in the special case when $p=2$ and we also prove a necessary condition for the existence of a solution in the general case, for sufficiently large $q$. The second half of the paper is devoted to a full description of the algorithm. \\

\section{Repartition of the primes}

Our discussion will be based on the following fundamental lemma.
\begin{lemma}[Admissibility of a top row]\label{prop:admissibility}
If $k_1\,\,k_2\,\dotsm\, k_{\alpha_p+1}$ is the top row of a solution in $p^aq^b$ with $b\geq 0$ and $0\leq a\leq \al_p$, then $q$ divides $\sum_{i=1}^{\alpha_p+1}k_ip^{\alpha_p+1-i}$.
\end{lemma}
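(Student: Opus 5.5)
The plan is to clear denominators in the equation and reduce modulo $q$. Let $\beta$ be the index of the top row, i.e.\ the largest $q$-valuation occurring in the solution, so $\beta=\max\{b : k_{a,b}\neq 0\}$. Write the top row as $k_i=k_{i-1,\beta}$ for $1\le i\le \al_p+1$. The equation reads
\[\sum_{a=0}^{\al_p}\sum_{b=0}^{\beta}\frac{k_{a,b}}{p^a q^b}=1 .\]

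First I will multiply through by $p^{\al_p}q^{\beta}$, which gives
\[\sum_{a,b} k_{a,b}\,p^{\al_p-a}q^{\beta-b}=p^{\al_p}q^{\beta}.\]
Every term with $b<\beta$ contains a factor $q^{\beta-b}$ with positive exponent, so it is divisible by $q$.

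The key step is the assumption $\beta\ge 1$. In that case the right-hand side is also divisible by $q$. Reducing modulo $q$ therefore leaves only the terms with $b=\beta$:
\[\sum_{a=0}^{\al_p}k_{a,\beta}\,p^{\al_p-a}\equiv 0 \pmod q.\]
Reindexing with $i=a+1$ turns this into $\sum_{i=1}^{\al_p+1}k_i\,p^{\al_p+1-i}\equiv 0\pmod q$, which is the claim.

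The only point needing care is the degenerate case $\beta=0$, where the top row is the bottom row. The paper's standing hypothesis excludes it, since $q$ must appear as a factor in at least one denominator. I would state explicitly that $\beta\ge1$ follows from this hypothesis, because otherwise the right-hand side $p^{\al_p}$ is not divisible by $q$ and the argument fails. Beyond that the proof is a routine valuation argument, and I expect no real obstacle.
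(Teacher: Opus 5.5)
Your proposal is correct and is essentially the paper's proof. Both clear denominators by $p^{\alpha_p}q^{\alpha_q}$, observe that every term outside the top row carries a factor $q$, and reduce modulo $q$. Your explicit remark that $\alpha_q\geq 1$ is needed is a point the paper uses only implicitly; it comes from the standing hypothesis that $q$ divides some denominator.
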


\begin{proof} Denote by $\al_q$ the highest $q$-valuation. By reducing to the same denominator we obtain:
\[p^{\al_p}k_1+p^{\al_p-1}k_2+\dots + k_{\al_p+1}+qm=p^{\al_p}q^{\al_q},\]
for an appropriate integer $m$. The result follows.
\end{proof}

In light of Lemma~\ref{prop:admissibility}, we call a top row \emph{admissible} if $q$ divides $\sum_{i=1}^{\alpha_p+1}k_ip^{\alpha_p+1-i}$.

\begin{definition}[Push value] When referring to the top row of a solution $k_1\;\dots\,k_{\al_p+1}$, we call push value the integer resulting from the quotient $\sum_{i=1}^{\alpha_p+1}k_ip^{\alpha_p+1-i}/q$.
\end{definition}
Notice that
\[\sum_{i=1}^{\al_p+1}k_ip^{\al_p-i+1}\leq p^{\al_p}\sum_{i=1}^{\al_p+1}k_i\leq p^{\al_p}n.\]
Then, by Lemma~\ref{prop:admissibility} we have: \[q <p^{\al_p}n.\]
We will improve this bound in forthcoming Corollary~\ref{cor:boundimprovement}.

The case $p=2$ is special in that if we
fix $\al_2$ and $q$, we can show that there is always a solution to the Diophantine equation for sufficiently large $n$. First, we introduce a few more notations.
\begin{notation}
For a prime $p$ and an integer $m$, we will denote by $\lb m\rb_{\al_p}$ the residue of $m$ modulo $p^{\al_p}$.
\end{notation}
\begin{notation}
Denote any integer $\ell=a_0+pa_1+\dots+a_kp^k$ with $a_i\in\lb 0,1,\dots, p-1\rb$ for each $i$ with $0\leq i\leq k$ written (uniquely) in base $p$ by
$\overline{a_ka_{k-1}\dotsm a_0}^{(p)}$. \\
Let $N(\ell)\coloneqq\sum_{i=0}^k{a_i}$. In the case when $p=2$, $N(\ell)$ counts the number of $1$'s in the binary expansion of $\ell$ and will be specially denoted by $N_1(\ell)$.
\end{notation}
\begin{definition}
We call \textit{right move} the single action operated on an array by shifting a unit to a $p$ in its right adjacent box on the horizontal axis, whenever that is possible. We call \textit{left move} the reverse move whenever it is possible.
\end{definition}

In the following proposition we set $p=2$.

\begin{proposition}\label{prop:construction} This result and its proof are both due to Joel Louwsma.\\
Let $\alpha_2$ be a given positive integer and $q$ be a given odd prime.\\ If $n\geq \frac{q-\qmat}{2^{\alpha_2}}+N_1(\qmat)+\alpha_2$, then there exists a solution to $\sum_{i=1}^n \frac{1}{x_i}=1$ in $2^a q^b$, where $b\geq 0$ and $0\leq a\leq\al_2$ with $2$ and $q$ each appearing at least once. 
\end{proposition}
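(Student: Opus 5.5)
The plan is a two-stage argument. First, build an explicit solution with exactly $n_0=\frac{q-\qmat}{2^{\al_2}}+N_1(\qmat)+\al_2$ terms. Then show that local moves can raise the number of terms by any amount $k\geq 0$ while keeping both primes present and the $2$-valuation at most $\al_2$.

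\emph{Base solution.} Write $r=\qmat$ and $m=(q-r)/2^{\al_2}$, so that $q=m2^{\al_2}+r$. Since $q$ is odd, $r$ is odd, so $r\geq 1$ and $N_1(r)\geq 1$. Take $m$ copies of $1/q$. Write $r=\sum_{e\in E}2^e$ in binary with $|E|=N_1(r)$, and take the terms $1/(2^{\al_2-e}q)$ for $e\in E$. These $m+N_1(r)$ terms sum to
\[\frac{m2^{\al_2}+r}{2^{\al_2}q}=\frac{1}{2^{\al_2}}.\]
Complete with $1/2+1/4+\dots+1/2^{\al_2}$, which adds $\al_2$ terms, to obtain a total of $1$ with exactly $n_0$ terms. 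All $2$-exponents lie in $[0,\al_2]$. The prime $q$ appears because $N_1(r)\geq1$, and the prime $2$ appears because $\al_2\geq1$.

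\emph{Increasing the length.} We use two moves on a term $1/(2^{\al_2-1}q^b)$:
\begin{itemize}
\item the $2$-split, which replaces it by two copies of $1/(2^{\al_2}q^b)$ and adds $1$ term;
\item the $q$-split, which replaces it by $q$ copies of $1/(2^{\al_2-1}q^{b+1})$ and adds $q-1$ terms.
\end{itemize}
Neither move exceeds the $2$-valuation $\al_2$, and the $q$-split keeps a supply of terms in column $\al_2-1$.

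The base solution contains such a term. If $\al_2\geq2$, the term $1/2^{\al_2-1}$ from the geometric chain serves. If $\al_2=1$, column $0$ is needed: then $r=1$ and $m=(q-1)/2\geq1$, so a copy of $1/q$ serves.

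To reach $n=n_0+k$, write $k=j(q-1)+s$ with $j\geq 0$ and $0\leq s<q-1$. Apply $j$ successive $q$-splits, each time to one of the column-$(\al_2-1)$ terms just created; this leaves at least $1+j(q-1)\geq 1>s$ such terms. Then $2$-split $s$ of them. Both primes remain present, since $q$-terms and $2$-terms are never removed entirely.

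\emph{Main obstacle.} The delicate point is the bookkeeping of which terms are available for splitting, especially the edge case $\al_2=1$, where the available terms sit in column $0$ and one must check that $m\geq1$. One should also confirm that the base count is exactly the bound in the statement. This holds because binary digits of $r$ and copies of $1/q$ never merge, so they contribute exactly $m+N_1(r)$ terms.
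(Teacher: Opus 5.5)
Your base solution is the paper's two-row array and is correct, but the lengthening step has a genuine gap. The inequality $1+j(q-1)\geq 1>s$ is false, because $s$ ranges over $0,\dots,q-2$. When $j=0$ you have only one or two terms in column $\al_2-1$ to $2$-split (or $m$ of them if $\al_2=1$).

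Allowing right moves in every column does not rescue the argument. The total right-move capacity of the base solution is $q-m-N_1(\qmat)+2^{\al_2}-1-\al_2$, which can be smaller than $q-2$, and your only other move jumps by $q-1$ at once. Take $q=7$ and $\al_2=1$: the base solution is $\frac17+\frac17+\frac17+\frac1{14}+\frac12$, with $n_0=5$. The only splittable terms are the three copies of $\frac17$, and every $q$-split adds $6$. So $n=9$ and $n=10$ are unreachable by your moves, although $\frac{2}{49}+\frac{3}{98}+\frac{3}{7}+\frac12=1$ is a valid solution with $9$ terms.

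The missing ingredient is a big move whose increment is small enough for right moves to fill the gaps. The paper inserts intermediate rows $\frac{q-\qmat}{2^{\al_2}},k_2,\dots,k_{\al_2},0$ between the top row and the bottom row, pushing the top row up one $q$-level each time. Equivalently, it replaces the term $1/(2^{\al_2}q^b)$ by the whole top-row pattern at level $b+1$. This adds exactly $m+N_1(\qmat)-1$ terms, which is positive since it vanishes only for $q=1$.

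Now write $n-n_0=\ell\,(m+N_1(\qmat)-1)+r$ with $0\leq r<m+N_1(\qmat)-1$. Insert $\ell$ such rows and perform $r$ right moves on the top row. This suffices because the top row alone admits $q-m-N_1(\qmat)$ right moves, and
\[
q-m-N_1(\qmat)\;\geq\; m+\qmat-N_1(\qmat)\;\geq\; m+N_1(\qmat)-2 .
\]
The first inequality uses $q\geq 2m+\qmat$, and the second uses $\qmat\geq 2^{N_1(\qmat)}-1\geq 2N_1(\qmat)-2$.
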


\begin{proof} Due to Joel Louwsma.
Write $\qmat =\overline{k_2k_3\dotsm k_{\al_2}1}^{(2)}$
and observe that
\[
\begin{array}{cccccc}
\frac{q-\qmat}{2^{\alpha_2}}&k_2&k_3&\cdots&k_{\alpha_2}& 1  \\
0& 1&1&\cdots&1&1
\end{array}
\]
is a solution in $m$ variables with $m=\frac{q-\qmat}{2^{\alpha_2}}+N_1(\qmat)+\alpha_2$. We will show how to build a solution in $n> m$ variables from this solution in $m$ variables. This will be done by operating some right moves on the first row (with each such move resulting in adding a single variable) or by adding some intermediate rows of the form
\[
\begin{array}{cccccc}\frac{q-\qmat}{2^{\al_2}}&k_2&k_3&\cdots&k_{\al_2}&0\end{array}
\]
between the first row and the last row, or by doing both.
Explicitly, divide $n-m$ by $\frac{q-\qmat}{2^{\al_2}}+N_1(\qmat)-1$ (note, we cannot have both $q=\qmat$ and $N_1(\qmat)=1$. This ensures that the dividend is nonzero). Write
\[
n-m=\ell\Big(\frac{q-\qmat}{2^{\al_2}}+N_1(\qmat)-1\Big)+r,\]
with
\[
0\leq r<\frac{q-\qmat}{2^{\al_2}}+N_1(\qmat)-1.
\]
Then, it will suffice to add $\ell$ intermediate rows and perform $r$ right moves on the first row in order to get a solution in $n$ variables.
\end{proof}

The next proposition states some converse of Proposition~\ref{prop:construction} in the general case, under some restrictions. First, we define the notions of ``bottom row" and ``last row" of a solution, and we introduce another useful move, a vertical one this time. 

\begin{definition}
In a solution, we call ``bottom row" the row containing only $p$-powers; we call ``last row" the row corresponding to the minimal $q$-valuation appearing in the solution when this minimal $q$-valuation is not equal to zero. 
\end{definition}
For instance, the solution provided in $\S\,1$ is a solution with a bottom row; we give below an instance of a solution which has a last row instead. Take $n=7$, $p=3$, $q=5$ and $\al_3=2$. Our program returns a total of $22$ solutions. Amongst those, only one of them has a last row, namely:
\[
\begin{ytableau}
4 & 3 & 0 
\end{ytableau}\;,\;
\text{with $\al_5=1$.}
\]
Note, in that special case, the last row is also the top row. Note also that there exist values of $n$, $p$, $q$ and $\al_p$ for which there is not a single solution with a last row. This is for instance the case when $n=13$, $p=2$, $q=91$ and $\al_2=4$. Namely, there are exactly $6$ solutions in that case, which are obtained from running our program on these values, and all of these solutions have bottom rows. 
\begin{definition} Whenever a box of the array contains an integer which is at least $q$, we call \textit{push down move} the single action performed on the array by shifting the $q$ units down along the vertical axis to a unit in the box below, while leaving all the other integers unchanged.
A push down move may never be performed on the bottom row of the array.
\end{definition}


\begin{proposition}\label{prop:converse} Fix the prime $p$ and the highest $p$-valuation in a solution $\al_p\geq 1$. \\
Let $k$ be an integer with $1\leq k\leq \al_p$ and assume that $q\geq p^{\alpha_p}\Big((p-1)(2\alpha_p-k)+2\Big)-2$. Then, every solution to $\sum_{i=1}^n \frac{1}{x_i}$ in $p^a q^b$ with $b\geq 0$, $0\leq a\leq\al_p$ and $p$ and $q$ each appearing at least once, must have a number $n$ of integer variables satisfying to
\[n\geq \text{Min}_{1\leq s\leq p^k-1}\Big[\frac{sq-\lb sq\rb_{\al_p}}{p^{\al_p}}+N\big(\lb qs\rb_{\al_p}\big)\Big]+(\al_p-k)(p-1)+1.\]
\end{proposition}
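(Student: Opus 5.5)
The plan is to split the solution into its top row and everything below it, and to count the variables in each part separately. Write the solution as $\{k_{a,b}\}$ with $0\le a\le\al_p$ and $0\le b\le\al_q$, where $\al_q$ is the top $q$-valuation. Clearing denominators gives
\[\sum_{a,b}k_{a,b}\,p^{\al_p-a}q^{\al_q-b}=p^{\al_p}q^{\al_q}.\]
By Lemma~\ref{prop:admissibility}, the top row satisfies $\sum_a k_{a,\al_q}p^{\al_p-a}=sq$, where $s\ge1$ is its push value.

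\textbf{Step 1 (greedy bound for the top row).} Among all rows $(k_1,\dots,k_{\al_p+1})$ with $\sum_i k_ip^{\al_p+1-i}=sq$, the number of entries $\sum k_i$ is minimised by the base-$p$ representation of $\{sq\}_{\al_p}$ in the columns $2,\dots,\al_p+1$, with the rest placed in column $1$. The reason is that a left move (replacing $p$ units by one unit) never increases the count. Hence the top row contains at least
\[\frac{sq-\{sq\}_{\al_p}}{p^{\al_p}}+N(\{sq\}_{\al_p})\]
variables.

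\textbf{Step 2 (large push values).} If $s\ge p^k$, Step 1 gives at least $(p^kq-p^{\al_p}+1)/p^{\al_p}\ge 2q/p^{\al_p}-1$ top-row variables. On the other hand, the term $s=1$ of the minimum is at most $q/p^{\al_p}+\al_p(p-1)$. The hypothesis $q\ge p^{\al_p}\big((p-1)(2\al_p-k)+2\big)-2$ is tailored so that $(2q-p^{\al_p})/p^{\al_p}$ exceeds $q/p^{\al_p}+\al_p(p-1)+(\al_p-k)(p-1)+1$. I will check this inequality and the exact rounding; I expect the ``$-2$'' in the hypothesis to come from the $+1$ and $-1$ adjustments. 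With it, the top row alone already yields the claimed bound. So we may assume $1\le s\le p^k-1$.

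\textbf{Step 3 (rows below the top, small push values).} It remains to show that the rows below the top contribute at least $(\al_p-k)(p-1)+1$ further variables. Performing push down moves conceptually, the top row is equivalent to $s$ units placed in column $\al_p+1$ of the row below. The lower rows together with these $s$ units then form a solution of the same shape. Reducing the identity modulo $p^{\al_p}$ and using $\gcd(p,q)=1$, the lower rows must make the weighted sum of all entries divisible by $p^{\al_p}$, while $v_p(s)<k$. I would try a carry-counting argument: starting from the weight-$1$ column, each column below $\al_p-k$ that must ``absorb'' a carry forces at least $p-1$ entries, or else forces the $p$-adic valuation to stall, and at least one further entry is needed at the end. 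In the bottom-row case, with $b=0$, this is a digit-sum statement: the bottom row sums to $p^{\al_p}q^{\al_q}$ minus the contributions of the rows above, and $N$ is subadditive under the carries.

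\textbf{Main obstacle.} I expect Step 3 to be the hardest part. The rows below the top carry varying $q$-power weights, which act as units modulo $p^{\al_p}$, so a single entry could in principle cancel the residue. The argument therefore has to use the exact equality, and not just the congruence: I would apply the greedy bound of Step 1 row by row, propagating the push values downward. Only then does the digit-sum lower bound $(\al_p-k)(p-1)+1$ come out.
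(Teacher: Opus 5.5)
\textbf{Verdict: Steps 1 and 2 are fine, but Step 3 has a genuine gap.}

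Steps 1 and 2 are sound. Step 2 is the paper's treatment of the case $s\ge p^k$. The paper compares with the $s=p^k-1$ term of the minimum. Your comparison with the $s=1$ term also works if you keep the exact quantities $(p^kq-p^{\al_p}+1)/p^{\al_p}$ and $(q-1)/p^{\al_p}+(p-1)\al_p$, the latter using $\lb q\rb_{\al_p}\ge 1$; that is exactly where the $-2$ comes from.

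The gap is Step 3. The claim that the rows below the top contribute at least $(\al_p-k)(p-1)+1$ entries is not a carry or digit-sum phenomenon. No argument that ignores the size of $q$ can prove it, because it is false for small $q$. Take $p=2$, $q=3$, $\al_2=5$, $k=1$:
\[\frac12+\frac13+\frac18+\frac1{32}+\frac1{144}+\frac1{288}=1 .\]
Its top row $\frac1{144}+\frac1{288}$ has push value $s=1$, yet there are only $4<5$ entries below it. The proposition's bound ($7$) also fails here, since $n=6$. What happens is that the row at $q$-valuation $1$ receives $s=1$ and, with the single entry $1/3$ (weight $32$), produces push value $t=11\ge p^k$. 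The bottom row then only has to complete $11$ to $32$.

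\textbf{The missing idea.} This is the dichotomy the paper uses when propagating push values downward. Your ``row by row'' remark points in this direction but stops short of it. Let $t_0=s,t_1,t_2,\dots$ be the successive push values. There are two cases.

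\emph{All push values stay in $\{1,\dots,p^k-1\}$ down to $q$-valuation $0$.} Then the bottom row exists and has weight $p^{\al_p}-t$ with $1\le t\le p^k-1$. The base-$p$ digits of $p^{\al_p}-t$ in positions $k,\dots,\al_p-1$ all equal $p-1$, and some lower digit is nonzero. So the bottom row has at least $N(p^{\al_p}-t)\ge(\al_p-k)(p-1)+1$ entries; this is what Lemma~\ref{lem:uniqueness} encodes.

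\emph{Some row receives $t_{j-1}\le p^k-1$ and emits $t_j\ge p^k$.} This case is forced when there is no bottom row, since the push into level $0$ is then $p^{\al_p}$. That single row has weight $t_jq-t_{j-1}\ge p^kq-p^k+1$. Since each entry weighs at most $p^{\al_p}$, the row has at least $(p^kq-p^{\al_p}+1)/p^{\al_p}$ entries. You then use the hypothesis on $q$ a second time, exactly as in your Step 2, to see that this row alone already exceeds the whole right-hand side.

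Without this second case and the second use of the hypothesis, Step 3 does not go through. Your sketch also leaves the last-row case (no row at $q$-valuation $0$) untreated; it belongs to the second case.
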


\begin{proof}
Let $k$ be an integer with $1\leq k\leq \al_p$. The assumption on $q$ is equivalent to
\begin{equation}\label{eq:inone}
\frac{(p^k-1)q-1}{p^{\al_p}}+(p-1)\al_p+(\al_p-k)(p-1)+1\leq \frac{p^kq-p^{\al_p}+1}{p^{\al_p}}.
\end{equation}
Suppose we have a solution. Up to decreasing the number of variables by operating some left moves on the top row, we may assume that the latter row is of the form
\[
\begin{array}{cccc} k_1&a_2&\dots&a_{\al_p+1},\end{array}
\]
with $a_i\in\lb 0,\dots,p-1\rb,$ $\forall i=2,\dots,\al_p+1$.
Moreover, the fact that the row is admissible forces:
\[
k_1=\frac{qs-\lb qs\rb_{\al_p}}{p^{\al_p}},\;\;\text{some integer $s\geq 1$.}
\]
\indent Notice when $s\geq p^k$ that:
\[
k_1\geq \frac{p^kq-p^{\al_p}+1}{p^{\al_p}}.
\]
Obviously,
\begin{equation}\label{eq:intwo}\text{Min}_{1\leq s\leq p^k-1}\Big[\frac{sq-\lb sq\rb_{\al_p}}{p^{\al_p}}+N\big(\lb qs\rb_{\al_p}\big)\Big]\leq \frac{(p^k-1)q-1}{p^{\al_p}}+(p-1)\al_p.\end{equation}
By a combination of inequalities \eqref{eq:inone} and \eqref{eq:intwo}, we then obtain the result in the case when $s\geq p^k$. 

When $1\leq s\leq p^k-1$ and there are exactly two rows representing the solution, namely the top row and the bottom row, then the bound is obtained without any assumption on $q$. Indeed, reducing the first row by a push right push down operations yields the row
\begin{equation}\label{eq:rownumber}
\begin{array}{ccccc}0&0&\dots&0&s,\end{array}
\end{equation}
where the highest power of $q$ has now been decreased by one.
By forthcoming Lemma~\ref{lem:uniqueness} of $\S\,3$, there is a unique configuration of variables which makes Row~\eqref{eq:rownumber} admissible as a bottom row. Moreover, it requires at least $(\al_p-k)(p-1)+1$ integer variables.

However, if this is not the bottom row of a solution, we want to make this row admissible and reduce it by push right and push down moves. We proceed so repeatedly, until one of the following two situations arises. Either after admissibility, the push value is one of $\lb 1,\dots,p^k-1\rb$ and after push down, the latter value lies in the bottom row, or the push value is $t\geq p^k$. In the first situation, the reasoning from before applies and we thus obtain the inequality of Proposition~\ref{prop:converse}; in the second situation, we started from a row~\eqref{eq:rownumber}
with $1\leq s\leq p^k-1$ and added some variables in order to make this row admissible with resulting push value $t\geq p^k$. Up to decreasing the number of variables by using some left moves, the admissible row reads
\[
\begin{array}{ccccc}k^{'}_1&a^{'}_2&\dots&a^{'}_{\al_p}&(s+\kappa),\end{array}
\]
with $0\leq a^{'}_i,\kappa\leq p-1,\;\forall 2\leq i\leq \al_p$ and
\[k^{'}_1=  \frac{tq-s-\lb tq-s\rb_{\al_p}}{p^{\al_p}}.\]

We get: \begin{equation}\label{eq:inthree}n\geq \frac{p^kq-p^{\al_p}+1}{p^{\al_p}}+\frac{\kappa\,p^{\al_p}-s}{p^{\al_p}}.\end{equation}
We now distinguish between several cases:

$(i)$ If $\kappa\geq 1$, then Inequality~\eqref{eq:inthree} imlies a fortiori that 
\begin{equation}\label{eq:infour}n\geq  \frac{p^kq-p^{\al_p}+1}{p^{\al_p}},\end{equation}
and we conclude like before, using Inequalities \eqref{eq:inone} and \eqref{eq:intwo}. 

$(ii)$ If $\kappa =0$ and at least one of the $a_i$'s with $2\leq i\leq\al_p$ is nonzero, then Inequality~\ref{eq:infour} still holds, thus the result. 

$(iii)$ If $\kappa=0$ and for all $i$ with $2\leq i\leq \al_p$, $a_i=0$, then we have $\lb tq-s\rb_{\al_p}=0$, so that 
\[n\geq \frac{p^kq-p^k+1}{p^{\al_p}}\geq \frac{p^kq-p^{\al_p}+1}{p^{\al_p}}.\] Again, the latter inequality suffices in order to conclude. 
\end{proof}
The converse of Proposition~\ref{prop:converse} does not hold. Indeed, it suffices to consider the case when $p=3$, $\al_3=2$ and $q=53$. The prime $q$ satisfies to the bound of Proposition~\ref{prop:converse}, where we set $k:=2$, since $53\geq 52$. Moreover, we have $53\pmod 9=8=2+2.3$, and so evaluating the bracket when $s=1$ yields the integer $9$. If the converse of Proposition~\ref{prop:converse} were true, it would suffice to have $n\geq 10$ in order to have a solution with decomposition length $n$. However, our program returns no solution when $n=10$, $p=3$, $q=53$ and $\al_3=2$. 

The case of application of Proposition~\ref{prop:converse} with $p=2$ and $k=1$ is special in that, joint with Proposition~\ref{prop:construction}, it provides a necessary and sufficient condition for the existence of a solution, under some restrictions. The result appears below. 
\begin{corollary}\label{cor:cns} Let $\alpha_2$ be a given positive integer and let $q$ be a given odd prime satisfying to $q\geq 2^{\al_2}(2\al_2+1)-2$. 
Then, there exists a solution to $\sum_{i=1}^n \frac{1}{x_i}=1$ in $2^a q^b$, where $b\geq 0$ and $0\leq a\leq\al_2$ with $2$ and $q$ each appearing at least once, if and only if $n\geq \frac{q-\qmat}{2^{\alpha_2}}+N_1(\qmat)+\alpha_2$.
\end{corollary}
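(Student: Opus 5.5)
The proof splits into two directions. Sufficiency is Proposition~\ref{prop:construction} as stated: for any odd prime $q$ and any $n\geq \frac{q-\qmat}{2^{\al_2}}+N_1(\qmat)+\al_2$, that result produces a solution in $2^aq^b$ with $0\leq a\leq \al_2$ in which both primes appear. So only necessity needs work.

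For necessity I would apply Proposition~\ref{prop:converse} with $p=2$ and $k=1$. We have $1\leq k\leq \al_2$ because $\al_2\geq 1$.

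First I check the hypothesis on $q$. With $p=2$ and $k=1$, the required bound $q\geq p^{\al_p}\big((p-1)(2\al_p-k)+2\big)-2$ becomes
\[q\geq 2^{\al_2}\big((2\al_2-1)+2\big)-2=2^{\al_2}(2\al_2+1)-2,\]
which is exactly the assumption of the corollary.

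Next I evaluate the conclusion. The minimum over $1\leq s\leq p^k-1$ has only the term $s=1$, so the bracket equals
\[\frac{q-\qmat}{2^{\al_2}}+N(\qmat).\]
For $p=2$ we have $N=N_1$. The additive term $(\al_p-k)(p-1)+1$ equals $(\al_2-1)+1=\al_2$. Hence every solution satisfies
\[n\geq \frac{q-\qmat}{2^{\al_2}}+N_1(\qmat)+\al_2,\]
which is the desired necessary condition.

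The main point to verify is that the notation agrees between the two propositions. Both use $\lb q\rb_{\al_2}$, the residue of $q$ modulo $2^{\al_2}$, in the same way. The base-$2$ digit sum $N(\lb qs\rb_{\al_p})$ at $s=1$ reduces to $N_1(\qmat)$. With that checked, combining the two directions proves the equivalence.
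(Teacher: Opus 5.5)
Your proposal is correct and matches the paper's argument: sufficiency is Proposition~\ref{prop:construction} as stated, and necessity is Proposition~\ref{prop:converse} specialized to $p=2$, $k=1$. In that specialization the hypothesis on $q$ becomes $q\geq 2^{\al_2}(2\al_2+1)-2$, the minimum collapses to the single term $s=1$, and the additive term is $(\al_2-1)+1=\al_2$; the paper presents the corollary as exactly this combination, so nothing further is needed.
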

The condition on $q$ and $\al_2$ cannot be waived to give a necessary and sufficient condition on the number of variables for the existence of a solution in the case when $p=2$. Namely, take $q=3$ and $\al_2=5$. Then, the following array (with bottom row)
\[\begin{ytableau}1&0&0&0&0&1\\
0&1&0&1&0&1\end{ytableau}\]
provides a solution with $5$ variables and $5<7$. \\

The corollary below provides an upper bound for the primes q involved in solutions when the prime $p$, the number $n$ of unknown variables and the table width $\al_p+1$ are set.
\begin{corollary}\label{cor:qmax}
Fix the number $n$ of unknowns and the width $\al_p+1$ of the table. The odd primes $q$ involved in the solutions to the Diophantine equation in $n$ integer variables of the form $p^aq^b$ with $b\geq 0$, $0\leq a\leq\al_p$ and $p$ and $q$ each appearing at least once, must satisfy
\[
q\leq Max\Big(p^{\al_p}\big((p-1)(2\al_p-k)+2\big)-1,p^{\al_p}\big(n-1-(p-1)(\al_p-k)\big)-1\Big),\]
for each integer $k$ with $1\leq k\leq\al_p$. 
\end{corollary}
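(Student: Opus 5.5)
We argue by contraposition from Proposition~\ref{prop:converse}. Fix $k$ with $1\leq k\leq \al_p$ and suppose $q$ is a prime occurring in a solution with $n$ variables, width $\al_p+1$, both primes present. Suppose
\[q> p^{\al_p}\big((p-1)(2\al_p-k)+2\big)-1;\]
we must show $q\leq p^{\al_p}\big(n-1-(p-1)(\al_p-k)\big)-1$. Since $q$ is an integer, $q\geq p^{\al_p}\big((p-1)(2\al_p-k)+2\big)\geq p^{\al_p}\big((p-1)(2\al_p-k)+2\big)-2$. The hypothesis of Proposition~\ref{prop:converse} therefore holds, and we get
\[n\geq \min_{1\leq s\leq p^k-1}\Big[\frac{sq-\lb sq\rb_{\al_p}}{p^{\al_p}}+N\big(\lb qs\rb_{\al_p}\big)\Big]+(\al_p-k)(p-1)+1.\]

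Next we bound the bracket from below, uniformly in $s\geq 1$. Since $s\geq1$ and $\lb sq\rb_{\al_p}\leq p^{\al_p}-1$,
\[\frac{sq-\lb sq\rb_{\al_p}}{p^{\al_p}}\geq \frac{q-p^{\al_p}+1}{p^{\al_p}}.\]
For the digit sum, $q$ is prime and distinct from $p$, so $p\nmid q$. If $p\nmid s$, then $\lb sq\rb_{\al_p}\neq 0$, hence $N(\lb sq\rb_{\al_p})\geq 1$. If $p\mid s$, then $s\geq p$ and the first term is already at least $(pq-p^{\al_p}+1)/p^{\al_p}$, which exceeds the needed bound by a wide margin. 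In either case the bracket is at least
\[\frac{q-p^{\al_p}+1}{p^{\al_p}}+1=\frac{q+1}{p^{\al_p}}.\]

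Substituting this into the inequality for $n$ gives
\[n\geq \frac{q+1}{p^{\al_p}}+(\al_p-k)(p-1)+1.\]
Rearranging yields $q+1\leq p^{\al_p}\big(n-1-(p-1)(\al_p-k)\big)$, which is exactly the second term in the maximum. Hence in all cases $q$ is at most the stated maximum, for every $k$.

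The main point requiring care is the case $p\mid s$ in the lower bound on the bracket, where the residue can vanish. There, using $s\geq p\geq 2$ in the first term supplies an extra $q/p^{\al_p}$, which is more than the $+1$ lost from the digit sum. The rest is routine algebra together with checking that the integrality of $q$ converts the strict inequality into the hypothesis of Proposition~\ref{prop:converse}.
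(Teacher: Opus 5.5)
Your proof is correct and follows the paper's argument: under the hypothesis $q>p^{\al_p}\big((p-1)(2\al_p-k)+2\big)-1$ you apply Proposition~\ref{prop:converse}, then bound the bracket from below using $s\geq 1$, $\lb sq\rb_{\al_p}\leq p^{\al_p}-1$ and $N(\lb sq\rb_{\al_p})\geq 1$. Your case split on $p\mid s$ is valid but unnecessary, because $1\leq s\leq p^k-1<p^{\al_p}$ together with $p\nmid q$ already gives $\lb sq\rb_{\al_p}\neq 0$; the paper uses $N\geq 1$ without stating this justification.
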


\begin{proof} Let $k$ be an integer with $1\leq k\leq\al_p$.  If $q>p^{\al_p}\big((p-1)(2\al_p-k)+2\big)-1$, then $q$ satisfies in particular to the bound of Proposition~\ref{prop:converse}. The latter proposition thus applies and yields:
\[s_0q\leq p^{\al_p}\Big(n-N(\lb qs_0\rb_{\al_p})-(\al_p-k)(p-1)-1\Big)+\lb s_0q\rb_{\al_p},\]
where we have assumed the minimum is attained at $s=s_0$ with $1\leq s_0\leq p^k-1$.
Then, we have: \[q\leq p^{\al_p}(n-1-(\al_p-k)(p-1)-1)+p^{\al_p}-1=p^{\al_p}\big(n-(p-1)(\al_p-k)-1\big)-1.\]
\end{proof}
We deduce the following improvements on the bound $p^{\al_p}n$ in the cases when $n\geq (p-1)\al_p+3$.

\begin{corollary}\label{cor:boundimprovement}
Fix the number $n$ of unknowns and the width $\al_p+1$ of the table. The odd primes $q$ involved in the solutions to the Diophantine equation in $n$ integer variables of the form $p^aq^b$ with $b\geq 0$, $0\leq a\leq\al_p$ and $p$ and $q$ each appearing at least once, must satisfy
\[q< \begin{cases}p^{\al_p}n&\nts\!\!\text{if $n-3<(p-1)\al_p$}\\
p^{\al_p}(n-1)&\nts\!\!\text{if $(p-1)\al_p\leq n-3\leq(p-1)(\al_p+1)$}\\
p^{\al_p}\big((p-1)(\al_p+1)+2\big)&\nts\!\!\text{if $(p-1)(\al_p+1)\leq n-3\leq(p-1)(\al_p+2)$}\\
p^{\al_p}(n-p)&\nts\!\!\text{if $(p-1)(\al_p+2)\leq n-3\leq(p-1)(\al_p+3)$}\\
p^{\al_p}\big((p-1)(\al_p+2)+2\big)&\nts\!\!\text{if $(p-1)(\al_p+3)\leq n-3\leq(p-1)(\al_p+4)$}\\
\qquad\qquad\vdots&\qquad\qquad\qquad\qquad\;\;\;\;\;\vdots\\
p^{\al_p}(n-1-(p-1)(\al_p-3)\big)&\nts\!\!\text{if $(p-1)(3\al_p-6)\leq n-3\leq(p-1)(3\al_p-5)$}\\
p^{\al_p}((p-1)(2\al_p-2)+2)&\nts\!\!\text{if $(p-1)(3\al_p-5)\leq n-3\leq(p-1)(3\al_p-4)$}\\
p^{\al_p}(n-1-(p-1)(\al_p-2)\big)&\nts\!\!\text{if $(p-1)(3\al_p-4)\leq n-3\leq(p-1)(3\al_p-3)$}\\
p^{\al_p}((p-1)(2\al_p-1)+2)&\nts\!\!\text{if $(p-1)(3\al_p-3)\leq n-3\leq(p-1)(3\al_p-2)$}\\
p^{\al_p}\big(n-1-(p-1)(\al_p-1)\big)&\nts\!\!\text{if $n-3\geq(p-1)(3\al_p-2)$}\end{cases}\]
\end{corollary}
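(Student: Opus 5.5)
The plan is to derive Corollary~\ref{cor:boundimprovement} from Corollary~\ref{cor:qmax} by choosing the best $k$ for each range of $n$. For $1\leq k\leq\al_p$, set
\[A_k=p^{\al_p}\big((p-1)(2\al_p-k)+2\big),\qquad B_k=p^{\al_p}\big(n-1-(p-1)(\al_p-k)\big).\]
Corollary~\ref{cor:qmax} then gives $q\leq \max(A_k,B_k)-1$, that is, $q<\max(A_k,B_k)$, for every $k$. Hence
\[q<\min_{1\leq k\leq \al_p}\max(A_k,B_k).\]
The whole proof reduces to computing this min--max as a function of $n$, and to keeping the trivial bound $q<p^{\al_p}n$ (from Lemma~\ref{prop:admissibility}) whenever it is better.

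As $k$ increases, $A_k$ decreases by $p^{\al_p}(p-1)$ at each step and $B_k$ increases by the same amount. Comparing them,
\[A_k\geq B_k \iff n-3\leq (p-1)(3\al_p-2k).\]
So for fixed $n$, the optimal $k$ is near the crossing point $k^*\approx (3\al_p-(n-3)/(p-1))/2$. The minimum of $\max(A_k,B_k)$ is attained at one of the two integers adjacent to $k^*$, subject to $1\leq k\leq\al_p$.

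I would split into ranges of $n-3$ of length $p-1$ and treat them in order.
\begin{itemize}
\item If $n-3\leq(p-1)(3\al_p-2k)$ for the admissible $k$ under consideration, then $A_k\geq B_k$. Increasing $k$ lowers $A_k$ until the crossing, so the bound is $A_{k}$ at the largest $k$ with $n-3\leq (p-1)(3\al_p-2k)$. Using $A_k=p^{\al_p}((p-1)(2\al_p-k)+2)$ with $k=\al_p-1,\al_p-2,\dots$, this produces the entries $p^{\al_p}((p-1)(\al_p+1)+2)$, $p^{\al_p}((p-1)(\al_p+2)+2)$, and so on.
\item In the intermediate windows $(p-1)(3\al_p-2k)\leq n-3\leq(p-1)(3\al_p-2k+1)$, the value $B_k$ is the larger of the pair at $k$. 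Here I expect $B_k$ to be at most $A_{k-1}$, so the answer is $B_k=p^{\al_p}(n-1-(p-1)(\al_p-k))$. This gives the entries $p^{\al_p}(n-1)$ (at $k=\al_p$), $p^{\al_p}(n-p)$ (at $k=\al_p-1$), and so on. I will check the claim $B_k\leq A_{k-1}$ on the window, which amounts to $n-3\leq(p-1)(3\al_p-2k+1)$.
\item The first case, $n-3<(p-1)\al_p$, is the one where every $\max(A_k,B_k)\geq A_{\al_p}=p^{\al_p}((p-1)\al_p+2)>p^{\al_p}(n-1)$. There the trivial bound $p^{\al_p}n$ is stated. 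Strictly, this bound is comparable to or better than $A_{\al_p}$ only for $n$ near $(p-1)\al_p+2$, so I would simply keep it as the stated (valid) bound.
\item The last case, $n-3\geq(p-1)(3\al_p-2)$, uses $k=1$, where $B_1\geq A_1$.
\end{itemize}

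The main obstacle is purely bookkeeping. I need to verify, at each boundary, that the two adjacent windows give matching values, so that the closed inequalities at the shared endpoints are consistent. I also need to confirm that the indicated $k$ really minimizes over all $k$, not just over its neighbours. The second point follows from the monotonicity of $A_k$ and $B_k$ in opposite directions, so the function $k\mapsto\max(A_k,B_k)$ is unimodal. Once that is established, each displayed line is a direct substitution.
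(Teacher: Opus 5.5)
Your proposal is correct and is essentially the paper's argument: the paper also derives every line from Corollary~\ref{cor:qmax}. It locates the crossings $A_k=B_k$ at $n-3=(p-1)(3\al_p-2k)$ and $A_k=B_{k+1}$ at $n-3=(p-1)(3\al_p-2k-1)$, uses $k=1$ for the last line, and falls back on $q<p^{\al_p}n$ in the first range; your unimodality remark adds an optimality claim that the corollary does not need. One aside needs correcting: for $n-3<(p-1)\al_p$, the trivial bound $p^{\al_p}n$ is at least as good as $A_{\al_p}$ for every such $n$, with equality only at $n=(p-1)\al_p+2$, not merely for $n$ near that value. This is exactly why the paper says Corollary~\ref{cor:qmax} gives no improvement there, and it does not affect the validity of your proof.
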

\begin{proof} Corollary~\ref{cor:boundimprovement} is a straightforward consequence of the bounds of Corollary~\ref{cor:qmax}. Indeeed, let $k$ be an integer with $1\leq k\leq\al_p$. We have: 
\[(p-1)(2\al_p-k)+2\leq n-1-(p-1)(\al_p-k)\Leftrightarrow n-3\geq (p-1)(3\al_p-2k).\] The last inequality of Corollary~\ref{cor:boundimprovement} then follows from applying Corollary~\ref{cor:qmax} with $k=1$. When $k=\al_p$, and $n-3<(p-1)\al_p$, Corollary~\ref{cor:qmax} implies $q<p^{\al_p}\big((p-1)\al_p+2\big)$, but $(p-1)\al_p+2>n-1$, hence there is no improvement with respect to the original bound of $\S\,1$. Let $k$ be an integer with $1\leq k\leq\al_p-1$. Suppose now that 
\[(p-1)(3\al_p-2k-2)\leq n-3\leq (p-1)(3\al_p-2k).\]
Then, Corollary~\ref{cor:qmax} forces:
\[\left\lbrace\begin{array}{l} q<p^{\al_p}\big(n-1-(p-1)(\al_p-k-1)\big),\\\\
q<p^{\al_p}\big((p-1)(2\al_p-k)+2\big).
\end{array}\right.\]
Moreover, we have:\[n-1-(p-1)(\al_p-k-1)\geq (p-1)(2\al_p-k)+2\Leftrightarrow n-3\geq (p-1)(3\al_p-2k-1).\]
It follows that:

\begin{equation}\label{eq:firstcase}\begin{split}
\text{If } (p-1)(3\al_p-2k-2)\leq n-3&\leq (p-1)(3\al_p-2k-1),\\&\text{then }q<p^{\al_p}\big(n-1-(p-1)(\al_p-k-1)\big).
\end{split}\end{equation}
\begin{equation}\label{eq:secondcase}\begin{split}
\text{If } (p-1)(3\al_p-2k-1)\leq n-3&\leq (p-1)(3\al_p-2k),\\&\nts\text{then }q<p^{\al_p}\big((p-1)(2\al_p-k)+2\big).
\end{split}\end{equation}
By applying Inequalities~\ref{eq:firstcase} and~\ref{eq:secondcase} for $k$ varying between $1$ and $\al_p-1$, we obtain all the inequalities of Corollary~\ref{cor:boundimprovement} corresponding to the intermediate rows located in between the first row and the last row. 
\end{proof}
The next paragraph uses the previously stated results in order to discuss the repartition of the primes $q$ occurring in solutions in the case when $p=2$.

Fix $n$ the number of unknowns and fix an odd integer $c<2^{\alpha_2}$. Proposition~\ref{prop:construction} shows that there is always a solution for sufficiently small values of $q$ of the form $c+2^{\al_2}l$. Specifically, the construction offered in the proof provides a solution whenever
\[
q\leq 2^{\al_2}(n-N_1(c)-\alpha_2)+c.
\]

We discuss below a few base values for $\al_2$.
\begin{enumerate}[label=\textup{\roman*.},ref=\textup{\roman*}]
\item If $\al_2=1$, the sole residue for $q$ is $c=1$. Then there exists a solution to the equation for every odd prime $q\leq 2n-3$.
\item If $\al_2=2$, as any odd prime is congruent to $1$ or $3$ modulo $4$, the possibilities for $c$ are $c=1$ or $c=3$. Then all the primes congruent to $1$ modulo $4$ satisfying to $q\leq 4n-11$ occur in solutions and all those congruent to $3$ moduo $4$ satisfying to $q\leq 4n-13$ occur in solutions. By gathering both facts, we see that all the odd primes $q\leq 4n-11$ occur in solutions.
\item If $\al_2=3$, we draw a table gathering the results.
\[
\begin{tabular}{|c|c|c|c|c|}
\hline
c & 1 & 3 & 5 & 7 \\ \hline
$q\,\leq$ & 8n-31 & 8n-37 & 8n-35 & 8n-41 \\
\hline\end{tabular}\]
\item If $\al_2=4$, the results get displayed in the table below.\\
\[ \begin{tabular}{|c|c|c|c|c|c|c|c|c|}
\hline
c & 1 & 3 & 5 & 7 & 9 & 11 & 13 & 15\\ \hline
$q\,\leq$ & 16n-79 & 16n-93 & 16n-91 & 16n-105 & 16n-87 & 16n-101 & 16n-99& 16n-113  \\ \hline\end{tabular}\]
\end{enumerate}
\[\begin{array}{l}\end{array}\]
Going back to ii., we state the following result. 
\begin{proposition}\label{prop:aleqtwo}
Let $q$ be an odd prime with $q\neq 3$. Then, there exists a solution to $\sum_{i=1}^n1/x_i=1$ in $2^aq^b$ with the integers $a$ and $b$ satisfying to $b\geq 0$, $0\leq a\leq 2$, and $2$ and $q$ each appearing at least once, if and only if 
$q\leq 4n-11$.
\end{proposition}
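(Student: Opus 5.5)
The plan is to prove the two directions separately. Sufficiency will come from Proposition~\ref{prop:construction} with $\al_2=2$. Necessity will come from Corollary~\ref{cor:cns}, applied with both $\al_2=2$ and $\al_2=1$, plus a finite check for small $q$.

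\emph{Sufficiency.} Let $q\neq 3$ be an odd prime with $q\leq 4n-11$. I apply Proposition~\ref{prop:construction} with $\al_2=2$, exactly as in item ii above.
\begin{itemize}
\item If $q\equiv 1\pmod 4$, then $\lb q\rb_2=1$ and $N_1(1)=1$. The required condition is $n\geq \frac{q-1}{4}+3$, which is equivalent to $q\leq 4n-11$.
\item If $q\equiv 3\pmod 4$, then $\lb q\rb_2=3$ and $N_1(3)=2$. The required condition is $n\geq\frac{q-3}{4}+4$, which is equivalent to $q\leq 4n-13$. Since $q\equiv 3$ and $4n-11\equiv 1 \pmod 4$, the inequality $q\leq 4n-11$ already forces $q\leq 4n-13$.
\end{itemize}
So a solution exists in both cases.

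\emph{Necessity.} Take a solution with $0\leq a\leq 2$, and let $\al\in\{1,2\}$ be its actual highest $2$-valuation.

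If $\al=2$ and $q\geq 2^2(2\cdot 2+1)-2=18$, Corollary~\ref{cor:cns} gives $n\geq \frac{q-\lb q\rb_2}{4}+N_1(\lb q\rb_2)+2$. By the computation above this yields $q\leq 4n-11$ if $q\equiv1\pmod 4$, and $q\leq 4n-13$ if $q\equiv 3\pmod 4$.

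If $\al=1$ and $q\geq 2(2+1)-2=4$, that is, for every odd prime $q\geq 5$, Corollary~\ref{cor:cns} gives $n\geq \frac{q-1}{2}+2$. Hence $q\leq 2n-3$. Moreover $q\geq 5$ forces $n\geq 4$, and then $2n-3\leq 4n-11$. So this case is settled for all $q\neq 3$.

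\emph{Remaining small primes.} What is left is $\al=2$ with $q\in\{5,7,11,13,17\}$. There Corollary~\ref{cor:cns} does not apply, and the admissibility bound $q<4n$ is too weak. I must show that no solution exists with $n<\frac{q+11}{4}$, that is, with $n\leq 3,4,5,6,6$ respectively.

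I expect this to be the main obstacle. The first attempt will be to rerun the proof of Proposition~\ref{prop:converse} with $p=2$, $\al_p=2$, $k=1$. The hypothesis on $q$ was used only to dispose of a push value $s\geq 2$ (and of $t\geq 2$ in the iteration). For these specific small $q$, the reduced top row $k_1\,a_2\,a_3$ with $4k_1+2a_2+a_3=qs$ can be examined directly for $s=2,3,\dots$. The task is to check that the variable count $k_1+a_2+a_3$ plus the at least two further variables needed below already exceeds the bound.

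If that bookkeeping becomes messy, I will fall back on a finite enumeration. Curtiss' bound $x_i<S_n$ with $n\leq 6$ makes the number of candidate denominators $2^aq^b$ finite, and the algorithm of the second part of the paper can list them. The check is that no solution of length $<\frac{q+11}{4}$ uses both $2$ and $q$. For instance, for $n\leq 3$ the only decompositions of $1$ are $\frac12+\frac12$, $\frac13+\frac13+\frac13$, $\frac12+\frac13+\frac16$ and $\frac12+\frac14+\frac14$, and none involves $5$.

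Finally, note that $q=3$ must be excluded: there solutions occur with $q$ larger than $4n-11$ relative to $n$, so the equivalence fails.
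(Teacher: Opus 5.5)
Your proposal is correct and is essentially the paper's proof: sufficiency from Proposition~\ref{prop:construction} with $\al_2=2$ as in item ii, necessity from Corollary~\ref{cor:cns} for $q\geq 18$, and a finite (computer) check for $q\in\{5,7,11,13,17\}$. Your separate use of Corollary~\ref{cor:cns} with $\al_2=1$ for solutions of highest $2$-valuation $1$ is a harmless extra precaution, since the paper applies the corollary with $\al_2=2$ to all solutions with $a\leq 2$.

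There are two small slips to fix. First, since $(13+11)/4=6$, for $q=13$ you need to exclude only $n\leq 5$, not $n\leq 6$; indeed $n=6$ is realized by the top row $3\;0\;1$ over the bottom row $0\;1\;1$. Second, in your bookkeeping sketch, ``at least two further variables below'' holds only for push value $s=1$, because $s=2,3$ leave a single bottom-row variable; even so, for these five primes the resulting two-row counts still reach the required bound.
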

\begin{proof}
By the discussion above, the sufficient condition holds. Moreover, by Corollary~\ref{cor:cns}, the necessary condition holds as soon as $q\geq 18$. Further, our program returns no solution in the cases when 
\[\begin{array}{ccc} q=5& \&& n=3,\\
q=7&\&&n\in\lb 3,4\rb,\\
q=11&\&&n\in\lb 3,4,5\rb,\\
q=13&\&&n\in\lb 3,4,5\rb,\\
q=17&\&&n\in\lb 3,4,5,6\rb.\
\end{array}\]

\end{proof}
We note that Proposition~\ref{prop:aleqtwo} does not hold when $q=3$. Indeed,  the array \[\begin{ytableau}1&1&0\\0&1&0\\
\end{ytableau}\] provides a solution in three variables, with bottom row and with highest $2$-valuation less than or equal to $2$. 

Our results have their limitations. For instance, notice from the table of iii. that we cannot statue on $8n-33$ when $n=8$. Fortunately, our program provides an answer to the case when $n=8$, $p=2$, $q=31$ and $\al_2=3$ by returning no solution in that case. However, we are able to decide when $n=13$. Indeed, since $71>54$, Proposition~\ref{prop:converse} applies and shows that prime number $71$ cannot occur in the solutions with highest $2$-valuation $3$. This shows in particular that there exist gaps in the primes $q$ occurring in solutions since both $67$ and $73$ occur when $n=13$, while the prime $71$ lying in between does not.

\section{Description of the algorithm}

At each step $i$ of the algorithm, we keep track of a number $l_i$ of left moves so that a new top row or a bottom row has a simplified admissible shape.
At the end, we must perform on each row $R_i$ a number $l_i$ of right moves. We must do so in all possible ways.
Let $N_i$ be the number of ways to expand row $R_i$ by operating $l_i$ right moves. Then for each reduced solution containing a bottom row, which has $\al_q+1$ rows (here $\al_q$ denotes the highest $q$-valuation occurring in the reduced solution, hence in the solution) produced by the algorithm, we thus obtain $\prod_{i=1}^{\al_q+1} N_i$ solutions; for each reduced solution rather having a last row and for which $r$ reduced rows have been made admissible, we obtain $\prod_{i=1}^{r} N_i$ solutions. Obviously, we obtain a set of solutions whose elements are not necessarily two by two distinct. Thus, we must make sure to list and count each solution only once. \\

\textbf{Initialize.} At first, we record $l_1$ left moves and a resulting reduced admissible top row of shape
\[\begin{array}{cc}\frac{sq-\lb sq\rb_{\al_p}}{p^{\al_p}}&\text{$p$-adic expansion of $\lb sq\rb_{\al_p}$,}\end{array}\]
with the expansion above written from right to left and possibly containing $0$'s on both ends in order to fill out all of the $\al_p$ columns (with the same convention applying throughout the rest of the paper), and with $s$ a positive integer which we record under the form of a couple $(s_1,\tilde{s_1})$ so that:
\[s=p^{\al_p}\tilde{s_1}+s_1,\;\; 0\leq s_1<p^{\al_p}\]
Note that $s_1$ and $\tilde{s_1}$ cannot be both zero, as otherwise the full row would be zero and cannot be a top row.
Since we have:
\[\frac{sq-\lb sq\rb_{\al_p}}{p^{\al_p}}=\tilde{s_1}q+\frac{s_1q-\lb s_1q\rb_{\al_p}}{p^{\al_p}},\]
the top row rewrites as:
\[\begin{array}{cc}\tilde{s_1}q+\frac{s_1q-\lb s_1q\rb_{\al_p}}{p^{\al_p}}&\text{$p$-adic expansion of $\lb s_1q\rb_{\al_p}$}\end{array}\]
\indent If $s_1=0$, $l_1\neq 0$ and we have already used all our integer variables by making the top row admissible while taking also into account the left moves, then we record a solution only if $\tilde{s_1}=q^b$ for some $b\geq 0$. The latter solution then has highest $q$-valuation $b+1$. The condition on $l_1$ is to ensure that $p$ occurs at least once as a factor in the denominators of a solution. 

If $\tilde{s_1}=0$, the push right, push down moves result in the following row:
\[\begin{array}{cccc} 0&\dots &0&s_1\end{array}\]
We can make this row admissible either as a bottom row or as a new top row.
First and foremost, we deal with the first situation which is specific to the fact that $\tilde{s_1}=0$. Again, we record a number $l_2$ of left moves, and so we must complete the row with integers $k_1$, $k_2$, $\dots$, $k_{\al_p+1}$ so that for each $i$ with $2\leq i\leq \al_p+1$, we have $0\leq k_i\leq p-1$ and the row
\[\begin{array}{ccccc}k_1&k_2&\dots&k_{\al_p}&(k_{\al_p+1}+s_1)\end{array}\] is admissible as a bottom row.
\begin{lemma}\label{lem:uniqueness}
There is a unique configuration of the $k_i$'s which makes the row admissible as a bottom row.
\end{lemma}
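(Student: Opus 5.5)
The plan is to turn ``admissible as a bottom row'' into an explicit linear equation and then invoke uniqueness of base-$p$ expansions. The setup is the one just described: $\tilde{s_1}=0$, so $1\leq s_1<p^{\al_p}$, and after the push right and push down moves the current row reads $0\;\dots\;0\;s_1$. A bottom row consists only of $p$-powers. Once the rows above have been reduced away, this row must by itself account for the whole unit, so the contributions of its boxes have to sum to exactly $1$. The box in column $i$ stands for $1/p^{i-1}$. Multiplying through by $p^{\al_p}$, the condition for the completed row $k_1\;k_2\;\dots\;k_{\al_p}\;(k_{\al_p+1}+s_1)$ to be admissible as a bottom row should therefore be
\[
\sum_{i=1}^{\al_p+1}k_i\,p^{\al_p+1-i}+s_1=p^{\al_p}.
\]
I would first justify this carefully. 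This amounts to rereading the proof of Lemma~\ref{prop:admissibility} with $\al_q$ replaced by $0$ for the last remaining row: all the mass of the rows above has been transported into the term $s_1$.

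Next I would dispose of $k_1$. Since $s_1\geq 1$ and every $k_i\geq 0$, the equation forces $k_1p^{\al_p}\leq p^{\al_p}-1$, hence $k_1=0$. The remaining condition is
\[
\sum_{i=2}^{\al_p+1}k_i\,p^{\al_p+1-i}=p^{\al_p}-s_1 ,
\]
where the $k_i$ with $2\leq i\leq\al_p+1$ are constrained to $\lb 0,\dots,p-1\rb$. This constraint is exactly the reduced shape obtained after the $l_2$ left moves.

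Because $0<p^{\al_p}-s_1\leq p^{\al_p}-1$, the right-hand side has a unique $p$-adic expansion with at most $\al_p$ digits. The digits $k_{\al_p+1},k_{\al_p},\dots,k_2$ must therefore be the base-$p$ digits of $p^{\al_p}-s_1$, read from right to left, which gives existence and uniqueness. As a by-product, the number of variables used is $N(p^{\al_p}-s_1)+s_1$. This is what I would use to recover the count $(\al_p-k)(p-1)+1$ invoked in the proof of Proposition~\ref{prop:converse} when $s_1\leq p^k-1$: there $p^{\al_p}-s_1$ has its top $\al_p-k$ digits all equal to $p-1$.

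The main obstacle is the first step: stating precisely what ``admissible as a bottom row'' means, namely that the equation is an equality to $p^{\al_p}$ and not merely a divisibility, and checking that the left-move normalization legitimately restricts the digits to $\lb 0,\dots,p-1\rb$ without losing solutions. Once this bookkeeping is settled, the rest is the standard uniqueness of base-$p$ representation.
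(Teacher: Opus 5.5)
Your argument is correct, but it takes a more global route than the paper. The paper never writes the equation $\sum_{i}k_ip^{\al_p+1-i}+s_1=p^{\al_p}$. Instead it works column by column from the right, using only that the number of terms of highest $p$-valuation in a solution must be divisible by $p$. This forces $k_j=0$ in the last $v_p(s_1)$ columns, and then $k_{\al_p+1-v_p(s_1)}=p-a_{v_p(s_1)}$. Each subsequent left move carries a $1$ one column to the left, which forces $k_j=p-1-a_{\al_p+1-j}$ successively, until a $1$ lands in position $(1,1)$ and forces $k_1=0$. This is exactly the borrow-by-borrow subtraction $p^{\al_p}-s_1$ in base $p$, so the arithmetic is the same as yours. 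Your formulation gives existence and uniqueness in one stroke from uniqueness of base-$p$ expansions. It also shows at once that $\text{NOT}^{(p)}(s_1)$, padded with zeros as in Remark~\ref{re:def}, is just $p^{\al_p}-s_1$. The paper's formulation instead delivers the explicit digits directly, and these are what it adopts as Definition~\ref{def:not} in the algorithm.

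Two minor points. First, what you flag as the main obstacle is routine. Right moves and push-down moves preserve the total sum, so once the upper rows are reduced into this row, the row must sum to exactly $1$. The restriction $0\leq k_i\leq p-1$ for $i\geq 2$ is part of the setup of the lemma (it is what the recorded $l_2$ left moves achieve), not something the lemma has to prove.

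Second, your by-product count is off. The $s_1$ units in the last box are mass carried down from the rows above, not variables, so the bottom row uses $\sum_i k_i=N(p^{\al_p}-s_1)$ variables. That is the quantity which is at least $(\al_p-k)(p-1)+1$ when $1\leq s_1\leq p^k-1$. Indeed, the top $\al_p-k$ digits of $p^{\al_p}-s_1$ equal $p-1$, and the lower digits represent $p^k-s_1\geq 1$.
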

\begin{proof}
Recall that $s_1<p^{\al_p}$. Then, $s_1$ can be decomposed in base $p$ over the last $\al_p$ columns. In this decomposition, the first column containing a non-zero pit is the $(v_p(s_1)+1)$-th column from the right, where $v_p(s_1)$ denotes the $p$-valuation of $s_1$. And so,
\[s_1=\overline{a_{\al_p-1}\dots a_{v_p(s_1)}\dots a_0}^{(p)}\qquad\text{with $\forall 0\leq k<v_p(s_1),\,a_k=0.$}\]
By a slight abuse of notation and in order to ease the discussion right below, the base $p$ decomposition of $s_1$ above may contain some $0$'s to the left hand side. 
We claim that for each $j$ with $\al_p+2-v_p(s_1)\leq j\leq \al_p+1$, we have $k_j=0$ when $v_p(s_1)\neq 0$. As otherwise, the highest $p$-valuation would occur a number of times which is not divisible by $p$. This is impossible. By the same argument, we may claim that:
\[k_{\al_p+1-v_p(s_1)}=p-a_{v_p(s_1)},\]
including in the case when $v_p(s_1)=0$. 
Then, the box in the $(v_p(s_1)+1)$-th column from the right contains a $p$.
We see by operating a left move that it forces inductively:
\[\forall 2\leq j\leq \al_p-v_p(s_1),\,k_j=p-1-a_{\al_p+1-j}.\]
At the end of the inductive process, there is a $p$ in the box of the second column, which shifted by left move translates into a $1$ in position $(1,1)$. This forces at last $k_1=0$ in order to obtain a solution.
\end{proof}
In light of Lemma \ref{lem:uniqueness} , we introduce a new definition.
\begin{definition}\label{def:not}
When $s=\overline{s_{\al_p(s)}\dots s_{v_p(s)}0\dots 0}^{(p)}$, define
\[\text{NOT}^{(p)}(s)=\overline{s^{'}_{\al_p(s)}\dots s^{'}_{v_p(s)+1}s^{'}_{v_p(s)}0\dots 0}^{(p)},\]
with \begin{equation*}
s^{'}_j=\begin{cases} p-1-s_j&\text{when $v_p(s)+1\leq j\leq\al_p(s)$}\\p-s_j&\text{when $j=v_p(s)$}
\end{cases}
\end{equation*}
\end{definition}
In the case when $p=2$, the operator on $s$ applies NOT on the last $\al_2(s)-v_2(s)$ bits of $s$, while leaving all the other bits unchanged.

Suppose now $\tilde{s_1}$ is either $0$ or positive. In both cases, we can make the row issued from the push right, push down operations an admissible top row. Like previously, we record a number $l_2$ of left moves to be applied as right moves in all possible ways at the end.
The row to be completed reads
\[\begin{array}{ccccc} \tilde{s_1}&0&\dots&0&s_1,\end{array}\]
with all the $k_i$'s to be added belonging to $\lb 0,\dots,p-1\rb$, except possibly $k_1$.
The admissibility condition from Lemma $1$ imposes:
\[k_1=\frac{qs_2-s_1-\lb qs_2-s_1\rb_{\al_p}}{p^{\al_p}}+q\tilde{s_2}-\tilde{s_1},\]
for some nonnegative integers $\tilde{s_2}$ and $s_2$ such that $0\leq s_2<p^{\al_p}$ and $s_2$, $\tilde{s_2}$ not both zero. The other $k_i$'s get added in such a way that: \[\lb qs_2-s_1\rb_{\al_p}=\overline{k_2\dots k_{\al_p+1}}^{(p)}.\]

\textbf{Iteration.} Suppose Row $(i-1)$ with $i\geq 3$ has been filled so that it is admissible as a top row. Reducing this row leads to the following row, one $q$-valuation down:
\[\begin{array}{ccccc} \tilde{s_{i-1}}&0&\dots&0&s_{i-1}.\end{array}\]

If the number of variables left is zero, by taking also into account the left moves, and if $s_{i-1}=0$, check whether $\tilde{s_{i-1}}$ is a power of $q$, namely $q^b$ for some $b\geq 0$. If so, then record a solution with highest $q$-valuation equal to $b+i-1$. Also keep track of the integer $b+2$ indicating the height of the solution with last row. 

Otherwise, record a number $l_i$ of left moves and the row solution $i$ defined by:
\[\begin{array}{ccc}R_i: & q\tilde{s_i}-\tilde{s_{i-1}}+\frac{qs_i-s_{i-1}-\lb qs_i-s_{i-1}\rb_{\al_p}}{p^{\al_p}}&\text{$p$-adic expansion of $\lb qs_i-s_{i-1}\rb_{\al_p}$}\end{array},\]
for some adequate nonnegative integers $\tilde{s_i}$ and $s_i$ with $0\leq s_i<p^{\al_p}$ and $s_i$ and $\tilde{s_i}$ not both equal to zero. 

Independently and only when $\tilde{s_{i-1}}=0$, add a number $l_i$ of left moves and check for a solution whose bottom row is
\[\begin{array}{ccccc} 0&\underbrace{\text{NOT$^{(p)}$($p$-adic expansion of $s_{i-1}$)}}&0&\dots&0,\\
&2\leq\text{columns}\leq \al_p-v_p(s_{i-1})+1&&&\end{array}\]
\noindent pending on the number of variables left. The highest $q$-valuation of such a solution equals $i-1$ and the  height of such solution is by convention $1$. \\

\textbf{Output before the right moves.}
The output before right moves for a solution with a bottom row consists of a certain number of rows, filled in the way we provide below, and a nonnegative integer associated with each row, instructing on the number of right moves to be performed on that row in order to get the final solution in the adequate number of integer variables. In the rows displayed below, $\al_q$ denotes the highest $q$-valuation of the solution.

\[\begin{array}{ccc}
\nts\nts l_1,&q\tilde{s_1}+\frac{qs_1-\lb qs_1\rb_{\al_p}}{p^{\al_p}}&\text{$p$-adic expansion of $\lb qs_1\rb_{\al_p}$}\\
\nts \nts l_2,&q\tilde{s_2}-\tilde{s_1}+\frac{qs_2-s_1-\lb qs_2-s_1\rb_{\al_p}}{p^{\al_p}}&\text{$p$-adic expansion of $\lb qs_2-s_1\rb_{\al_p}$}\\
&&\\
&\vdots&\\
&&\\
\nts\nts l_i,&\nts q\tilde{s_i}-\tilde{s_{i-1}}+\frac{qs_i-s_{i-1}-\lb qs_i-s_{1-1}\rb_{\al_p}}{p^{\al_p}}&\nts\nts\nts\text{$p$-adic expansion of $\lb qs_i-s_{i-1}\rb_{\al_p}$}\\
&&\\
&\vdots&\\
&&\\
\nts\nts l_{\al_q},&\nts\nts\nts\nts -\tilde{s_{\al_q-1}}+\frac{qs_{\al_q}-s_{\al_q-1}-\lb qs_{\al_q}-s_{\al_q-1}\rb_{\al_p}}{p^{\al_p}}&\nts\!\!\text{$p$-adic expansion of $\lb qs_{\al_q}-s_{\al_q-1}\rb_{\al_p}$}\\
&&\\
\nts\nts l_{\al_q+1},&0&\!\!\underbrace{\text{NOT}^{(p)}(\text{$p$-adic expansion of $s_{\al_q}$)}}\;\;0\;\;\dots0\\
&&2\leq\text{columns}\leq \al_p-v_p(s_{\al_q})+1\qquad\qquad\end{array}\]
\begin{remark}\label{re:def}
In the two paragraphs ``Iteration" and ``Output before right moves" of the above, we naturally extended Definition~\ref{def:not} on the operator NOT$^{(p)}$ to the case when, by abuse of notations, some $0$'s get added at the end of the base $p$-expansion. 
\end{remark}
Notice that $s_1,\tilde{s_1}$ cannot be both zero forces inductively $s_i,\tilde{s_i}$ cannot be both zero when $i\geq 2$.
Consequently, whenever $i\geq 1$, \\\\
\begin{tabular}{lll}
If&$s_i=0$,&the reduced row down cannot be a bottom row.\\
If&$\tilde{s_i}=0$,&the reduced row down can be either a new top row or a bottom row. \end{tabular}\\\\
We will now bound - in a brute force manner -  the admissible values for $l_i,s_i,\tilde{s_i}$ at each step $i$ of the algorithm. \\
Initially, the number of left moves must be less than $\al_p\lf\frac{n}{p}\rf$.
Thus, it will be sufficient to run the algorithm for
\[\left\lb\begin{array}{l}0\leq l_1<\al_p\lf\frac{n}{p}\rf,\\
0\leq s_1<p^{\al_p},\\
0\leq\tilde{s_1}\leq\lf\frac{n}{q}\rf,\\
(s_1,\tilde{s_1})\neq (0,0).
\end{array}\right.\]
After initialization, the number of remaining variables is:
\[n_1(l_1,s_1,\tilde{s_1})=n-l_1(p-1)-q\tilde{s_1}-\frac{qs_1-\lb qs_1\rb_{\al_p}}{p^{\al_p}}-N\big(\lb qs_1\rb_{\al_p}\big).\]
If $n_1(l_1,s_1,\tilde{s_1})\leq 0$, then proceed to the next values for $l_1,s_1,\tilde{s_1}$. Otherwise, store the data from the first row. \\
At each step $i\geq 2$, whenever $\tilde{s_{i-1}}=0$ and $n_{i-1}(l_{i-1},s_{i-1},0)>0$, try
\[0\leq l_i< \al_p\Big\lf \frac{n_{i-1}(l_{i-1},s_{i-1},0)}{p}\Big\rf.\]
If \[p\big(l_i+\al_p-v_p(s_{i-1})\big)-(l_i+\al_p-v_p(s_{i-1})-1)-N(s_{i-1})=n_{i-1}(l_{i-1},s_{i-1},0),\]
then store a solution with bottom row. Anyhow, try:
\[\left\lb\begin{array}{l}0\leq l_i< \al_p\lf\frac{n_{i-1}(l_{i-1},s_{i-1},\tilde{s_{i-1}})}{p}\rf,\\
0\leq s_i<p^{\al_p},\\
n_{i-1}(l_{i-1},s_{i-1},\tilde{s_{i-1}})+\tilde{s_{i-1}}-\frac{qs_i-s_{i-1}-\lb qs_i-s_{1-1}\rb_{\al_p}}{p^{\al_p}}\geq 0,\\\\
0\leq\tilde{s_i}\leq\lf\frac{n_{i-1}(l_{i-1},s_{i-1},\tilde{s_{i-1}})+\tilde{s_{i-1}}-(qs_i-s_{i-1}-\lb qs_i-s_{1-1}\rb_{\al_p})/p^{\al_p}}{q}\rf,\\
(s_i,\tilde{s_i})\neq (0,0),\\
q\tilde{s_i}-\tilde{s_{i-1}}+\frac{qs_i-s_{i-1}-\lb qs_i-s_{1-1}\rb_{\al_p}}{p^{\al_p}} \geq 0.
\end{array}\right.\]
Let \begin{multline*} n_i(l_i,s_i,\tilde{s_i}):=n_{i-1}(l_{i-1},s_{i-1},\tilde{s_{i-1}})-l_i(p-1)\\+\tilde{s_{i-1}}-q\tilde{s_i}-\frac{qs_i-s_{i-1}-\lb qs_i-s_{i-1}\rb_{\al_p}}{p^{\al_p}}-N(\lb qs_i-s_{i-1}\rb_{\al_p}).\end{multline*}
\begin{itemize}
\item If $n_i(l_i,s_i,\tilde{s_i})<0$, then proceed to the next try. 
\item If $n_i(l_i,s_i,\tilde{s_i})=0$, then check for a solution with last row. That is check whether 
\[\left\lb\begin{array}{l}
n_{i-1}+\tilde{s_{i-1}}+(s_{i-1}+\lb -s_{i-1}\rb_{\al_p})/p^{\al_p}\geq q,\\\\
\text{$s_i=0$ and $\tilde{s_i}=q^{b}$, for some integer $b$ with:}\\
0\leq b\leq log_q\Big\lf \frac{n_{i-1}+\tilde{s_{i-1}}+(s_{i-1}+\lb -s_{i-1}\rb_{\al_p})/p^{\al_p}}{q}\Big\rf. 
\end{array}\right.\]
If so, store a solution with last row, whose highest $q$-valuation is $b+i$ and whose height is $b+2$. 
\item If $n_i(l_i,s_i,\tilde{s_i})>0$, then store a new intermediate solution row. \\
\end{itemize}

\textbf{Output after right moves.}
After performing the right moves on the reduced solutions, discard those unicolumn solutions with last row for which the prime $p$ does not appear as a factor in the denominators (for those solutions, at each step the algorithm chooses a null number of left moves). For instance, when $n=q$, $p\in\mathcal{P}-\lb q\rb$ and $\al_p\geq 1$, the following solution of height $2$ must be discarded.
\[\begin{ytableau} q&0&\dots&0\end{ytableau}\] 


\end{document}